\newtheorem{theorem}{Theorem}
\newcommand{\ds}{\displaystyle}
\newcommand{\R}{\mathbb{R}}
\newcommand{\N}{\mathbb{N}}
\def\cp{\mathrm{cap}\,}
\def\diam{\mathrm{diam}}
\newcommand{\Om}{\Omega}
\newcommand{\vps}{\varepsilon}
\newcommand{\bp}{\begin{proof}}
\newcommand{\ep}{\end{proof}}
\begin{document}
\title{On capacity and torsional rigidity}

\author{{M. van den Berg} \\
School of Mathematics, University of Bristol\\
Fry Building, Woodland Road\\
Bristol BS8 1UG\\
United Kingdom\\
\texttt{mamvdb@bristol.ac.uk}\\
\\
G. Buttazzo\\
Dipartimento di Matematica\\
Universit\`a di Pisa\\
Largo B. Pontecorvo 5, 56127 Pisa\\
Italy\\
\texttt{giuseppe.buttazzo@dm.unipi.it}}
\date{4 September 2020}
\maketitle
\vskip1truecm\indent

\begin{abstract}\noindent
We investigate extremal properties of shape functionals which are products of Newtonian capacity $\cp(\overline{\Om})$, and powers of the torsional rigidity $T(\Om)$, for an open set $\Om\subset \R^d$ with compact closure $\overline{\Om}$, and prescribed Lebesgue measure. It is shown that if $\Om$ is convex then $\cp(\overline{\Om})T^q(\Om)$ is (i) bounded from above if and only if $q\ge 1$, and (ii) bounded from below and away from $0$ if and only if $q\le \frac{d-2}{2(d-1)}$. Moreover a convex maximiser for the product exists if either $q>1$, or $d=3$ and $q=1$. A convex minimiser exists for $q< \frac{d-2}{2(d-1)}$. If $q\le 0$, then the product is minimised among all bounded sets by a ball of measure $1$.
\end{abstract}

\vskip 1truecm
\noindent\textbf{2010 Mathematics Subject Classification:} 49Q10, 49J45, 49J40, 35J99.\\
\textbf{Keywords}: Newtonian capacity, torsional rigidity, Dirichlet boundary condition.

\section{Introduction and main results \label{sec1}}

Several classical inequalities of mathematical physics are of the following form. Let $F$ and $H$ be strictly positive set functions defined on a suitable collection $\mathfrak{C}$ of open sets in $\R^d$, and which satisfy scaling relations
$$F(t\Om)=t^{\beta_1}F(\Om),\qquad H(t\Om)=t^{\beta_2}H(\Om),\quad t>0,$$
where $t\Om$ is homothety of $\Om$, and $\beta_1,\beta_2$ are constants. Then the shape functional
$$G(\Om)=H(\Om)F(\Om)^{-\beta_2/\beta_1},$$
is invariant under homotheties, and in some cases this quantity is minimal (respectively maximal) for some open set $\Om^*\in\mathfrak{C}$,
$$G(\Om)\ge G(\Om^*)\quad\text{\big(respectively $G(\Om)\le G(\Om^*)$\big)},\qquad\Om\in\mathfrak{C}.$$
 The Faber-Krahn, Krahn-Szeg{\"o}, and Kohler-Jobin inequalities are of this form. See for example the seminal text \cite{PSZ}. In a recent paper \cite{vdBBP} a more general set of inequalities was investigated. These are of the following form: let $q\in\R,$ and consider the shape functional
$$G(\Om)=H(\Om)F(\Om)^q.$$
Then, unless $q=-\beta_2/\beta_1$, this product is not scaling invariant. However, denoting by $|\Om|$ the Lebesgue measure of $\Om$, the quantity
$$\frac{H(\Om)F(\Om)^q}{|\Om|^{(\beta_2+q\beta_1)/d}}$$
is scaling invariant. The case where $H$ is the principal Dirichlet eigenvalue, and $F$ is the torsional rigidity was analysed in \cite{vdBBP}. In the present paper we investigate, in the spirit of \cite{PSZ}, the case where $H$ is the Newtonian capacity, and $F$ is the torsional rigidity. Since the Newtonian capacity is most easily defined for compact subsets of $\R^d,\,d\ge 3,$  we restrict ourselves to open sets $\Om\subset\R^d,\,d\ge 3$ which are precompact. In that case the Newtonian capacity scales as a power $\beta_2=d-2$ of the homothety.

Throughout this paper we let $\Om$ be a non-empty, open, bounded set in Euclidean space $\R^d, \, d\ge 3$. For a set $A\subset\R^d$ we denote by $\overline{A}$ its closure, $\diam(A)=\sup\big\{|x-y|\ :\ x\in A,\ y\in A\big\}$ its diameter, and
$r(A)=\sup\big\{r\ge0\ :\ (\exists x\in A),\ (B_r(x)\subset A)\big\}$ its inradius, where $B_r(x)=\{y\in\R^d\ :\ |x-y|<r\}$ is the ball of radius $r$ centred at $x$. Before we state the main results we recall some basic facts about the torsion function, torsional rigidity, and Newtonian capacity.

The torsion function for an open set $\Om$ with finite measure is the solution of
$$-\Delta u=1,\quad u\in H_0^1(\Om),$$
and is denoted by $u_\Om$. It is convenient to extend $u_\Om$ to all of $\R^d$ by defining $u_\Om=0$ on $\R^d\setminus\Om$. It is well known that $u_\Om$ is non-negative, bounded (\cite{vdB,vdBC,GS,HV}), and monotone increasing with respect to $\Om$, that is
$$\Om_1\subset\Om_2\Rightarrow u_{\Om_1}\le u_{\Om_2}.$$
The torsional rigidity of $\Om$, or torsion for short, is denoted by
$$T(\Om)=\|u_{\Om}\|_1,$$
where $\|\cdot\|_p,\,1\le p\le \infty$ denotes the usual $L^p$ norm. It follows that
\begin{equation}\label{e4}
\Om_1\subset\Om_2\Rightarrow T(\Om_1)\le T(\Om_2),
\end{equation}
and that the torsion satisfies the scaling property
\begin{equation}\label{e5}
T(t\Om)=t^{d+2}T(\Om),\quad t>0.
\end{equation}
Moreover $T$ is additive on unions of disjoint families of open sets:
$$T(\cup_{i\in I}\Om_i)=\sum_{i\in I}T(\Om_i).$$
It is straightforward to verify that if $E(a)$, with $a=(a_1,a_2,\dots,a_d)\in\R_+^d$, is the ellipsoid
$$E(a)=\bigg\{x\in\R^d\ :\ \sum_{i=1}^d\frac{x_i^2}{a_i^2}<1\bigg\},$$
then
$$u_{E(a)}(x)=\frac12\bigg(\sum_{i=1}^d\frac{1}{a_i^2}\bigg)^{-1}\bigg(1-\sum_{i=1}^d\frac{x_i^2}{a_i^2}\bigg),$$
and
\begin{equation}\label{e9}
T(E(a))=\frac{\omega_d}{d+2}\bigg(\prod_{i=1}^d a_i\bigg) \bigg(\sum_{i=1}^d\frac{1}{a_i^2}\bigg)^{-1},
\end{equation}
where
$$\omega_d=\frac{\pi^{d/2}}{\Gamma((d+2)/2)}$$
is the Lebesgue measure of a ball $B_1$ with radius $1$ in $\R^d$. We put
$$\tau_d=T(B_1)=\frac{\omega_d}{d(d+2)}.$$
The de Saint-Venant inequality (see for instance Chapter V in \cite{PSZ}) asserts that
\begin{equation}\label{e12}
T(\Om)\le T(\Om^*),
\end{equation}
where $\Om^*$ is any ball with $|\Om|=|\Om^*|$. It follows by scaling that
\begin{equation}\label{e13}
\frac{T(\Om)}{|\Om|^{(d+2)/d}}\le\frac{\tau_d}{\omega_d^{(d+2)/d}}=\frac{1}{d(d+2)\omega_d^{2/d}}.
\end{equation}

Below we recall some basic facts about the Newtonian capacity $\cp(K)$ of a compact set $K\subset \R^d,\,d\ge 3$. There are several equivalent definitions of $\cp(K)$ of which we choose
$$\cp(K)=\inf\bigg\{\int_{\R^d}|D\varphi|^2\,dx\ :\ \varphi_{K_\vps}\ge1,\ \varphi\in C_0^1(\R^d),\vps>0\bigg\},$$
where $\varphi_{K_\vps}$ is the restriction of $\varphi$ to $K_{\vps}=\{x\in \R^d:\textup{dist}(x,K)<\vps\}$.
It follows that
\begin{equation}\label{e15}
K_1\subset K_2\Rightarrow \cp(K_1)\le\cp(K_2),
\end{equation}
and that the capacity satisfies the scaling property
\begin{equation}\label{e16}
\cp(tK)=t^{d-2}\cp(K),\,t>0.
\end{equation}
Moreover if $\{K_i,\,i\in I\}$ is a countable family of compact sets such that $\cup_{i\in I}K_i$ is compact, then
$$\cp(\cup_{i\in I}K_i)\le\sum_{i\in I}\cp(K_i).$$
It was reported in \cite{IMcK} pp. 260 that the Newtonian capacity of an ellipsoid was computed in volume 8, pp. 103-104 in \cite{GC}. The formula is given in terms of an elliptic integral, and reads
\begin{equation}\label{e18}
\cp\big(\overline{E(a)}\big)=\frac{4\pi^{d/2}}{\Gamma(d/2)}\mathfrak{e}(a)^{-1},
\end{equation}
where
\begin{equation}\label{e19}
\mathfrak{e}(a)=\int_0^{\infty}\bigg(\prod_{i=1}^d\big(a_i^2+t\big)\bigg)^{-1/2}dt.
\end{equation}
We put
$$\kappa_d=\cp(\overline{B_1})=\frac{4\pi^{d/2}}{\Gamma((d-2)/2)},$$
so that
\begin{equation}\label{e20}
\cp\big(\overline{E(a)}\big)=\frac{2\kappa_d}{d-2}\mathfrak{e}(a)^{-1}.
\end{equation}
The isoperimetric inequality for Newtonian capacity (see \cite{PSZ}) asserts that for all compact sets $K\subset \R^d$, $d\ge 3$,
$$\cp(K)\ge\cp(K^*),$$
where $K^*$ is a closed ball with $|K|=|K^*|$. It follows by scaling that
\begin{equation}\label{e22}
\frac{\cp(K)}{|K|^{(d-2)/d}}\ge \frac{\kappa_d}{\omega_d^{(d-2)/d}}.
\end{equation}
The shape functional we consider in the present paper is
\begin{equation}\label{e22a}G_q(\Om)=\frac{\cp(\overline{\Om})T(\Om)^q}{|\Om|^{1+q+2(q-1)/d}},
\end{equation}
defined for a bounded open set $\Om\subset\R^d$, $d\ge3$. By \eqref{e5} and \eqref{e16} we obtain that $G_q$ is scaling invariant. With the definitions above we have
$$G_q(B_1)= \frac{\kappa_d\tau_d^q}{\omega_d^{1+q+2(q-1)/d}}.$$

Since the ball $\Om^*$ with measure $|\Om^*|=|\Om|$ maximises the torsional rigidity $T(\Om)$ (de Saint-Venant), and its closure minimises the Newtonian capacity $\cp(\overline{\Om}),$ competition enters in the minimisation or maximisation problems for the functional in \eqref{e22a}.

\medskip

All of our main results are for $d\ge 3$, and are as follows.
\begin{theorem}\label{the1}
\begin{enumerate}
\item[\textup{(i)}]If $q\in \R$, then
$$\sup\{G_q(\Om)\ :\ \Om\textup{ open and bounded}\}=+\infty.$$
\item[\textup{(ii)}]If $q\le 0,$ then
\begin{equation}\label{e28}
\min\{G_q(\Om)\ :\ \Om\textup{ open and bounded}\}=G_q(B_1),
\end{equation}
with equality if and only if $\Om$ is (up to sets of capacity $0$) a ball in $\R^d$.
\item[\textup{(iii)}]If $q>0$, then
$$\inf\{G_q(\Om)\ :\ \Om\textup{ open and bounded}\}=0.$$
\end{enumerate}
\end{theorem}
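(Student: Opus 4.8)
The plan is to first recast the (scaling-invariant) functional as a product of the two scale-invariant ratios governed by the isoperimetric inequalities \eqref{e13} and \eqref{e22}. Comparing exponents in \eqref{e22a} one checks that
$$G_q(\Om)=\frac{\cp(\overline{\Om})}{|\Om|^{(d-2)/d}}\left(\frac{T(\Om)}{|\Om|^{(d+2)/d}}\right)^{q},$$
so the whole statement is a study of the competition between the capacity ratio $\cp(\overline{\Om})/|\Om|^{(d-2)/d}$, which by \eqref{e22} is at least its ball-value $\kappa_d/\omega_d^{(d-2)/d}$ and can be pushed to $+\infty$, and the torsion ratio $T(\Om)/|\Om|^{(d+2)/d}$, which by \eqref{e13} is at most its ball-value $\tau_d/\omega_d^{(d+2)/d}$ and can be pushed to $0$.

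I would dispatch (ii) first, since it is immediate from the two quoted inequalities. For $q\le0$, \eqref{e22} bounds the capacity ratio below by $\kappa_d/\omega_d^{(d-2)/d}$, while \eqref{e13} raised to the nonpositive power $q$ gives $(T(\Om)/|\Om|^{(d+2)/d})^{q}\ge(\tau_d/\omega_d^{(d+2)/d})^{q}$; multiplying the two yields $G_q(\Om)\ge G_q(B_1)$, attained at the ball. For the equality statement I would invoke the known equality cases: when $q<0$ both ratios must be extremal, and when $q=0$ only the capacity term survives, and in either case the rigidity in \eqref{e22} (resp.\ \eqref{e13}) forces $\Om$ to be a ball up to a set of capacity zero.

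For (iii) I would produce a minimising sequence of \emph{clustered disjoint balls}: take $N$ pairwise disjoint balls of a fixed radius packed inside a ball of radius $R_N=O(N^{1/d})$. Torsion and measure are exactly additive, so the torsion ratio is $\asymp N^{-2/d}\to0$; meanwhile monotonicity \eqref{e15} bounds $\cp(\overline{\Om})$ above by the capacity of the enclosing ball of radius $R_N$, which keeps the capacity ratio bounded. Hence $G_q(\Om)\lesssim N^{-2q/d}\to0$ for every $q>0$.

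For (i) the construction must keep the torsion ratio bounded away from $0$ (indispensable when $q>0$) \emph{and} send the capacity ratio to $+\infty$; since a single thin set cannot achieve both, I would use the disjoint union of one fixed unit ball with a \emph{cloud} of $N$ balls of small radius $\rho$, mutually separated by a large distance. Choosing $N\to\infty$, $\rho=\rho_N\to0$ with $N\rho^{d}\to0$, $N\rho^{d+2}\to0$ but $N\rho^{d-2}\to\infty$ (for instance $\rho_N=N^{-1/(d-1)}$) makes $|\Om|\to\omega_d$ and $T(\Om)\to\tau_d$, so the torsion ratio converges to the positive constant $\tau_d/\omega_d^{(d+2)/d}$ for every $q$, while the capacity ratio is driven to $+\infty$; consequently $G_q\to+\infty$ for all $q\in\R$. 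The main obstacle is precisely the required \emph{lower} bound $\cp\gtrsim N\rho^{d-2}$ on the cloud: \eqref{e15} and subadditivity only furnish upper bounds, so here I must leave the inequalities quoted in the introduction and use the energy/equilibrium-measure description of Newtonian capacity, testing it with the superposition $\mu=\sum_i\mu_i$ of the single-ball equilibrium measures and controlling the off-diagonal potentials by spreading the balls far enough apart that $U^{\mu}\le2$ on the cloud.
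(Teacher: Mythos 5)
Your parts (ii) and (iii) coincide with the paper's argument: (ii) is exactly the factorisation of $G_q$ into the two isoperimetric ratios \eqref{e22} and \eqref{e13}, with the equality discussion handled the same way, and your cluster of $N$ congruent balls in (iii) is, after rescaling, the paper's construction of $N^d$ balls of radius $1/(2N)$ inscribed in the subcubes of a unit cube (fixed enclosing set, so the capacity ratio stays bounded, while the torsion ratio decays like $N^{-2/d}$). Part (i) is where you genuinely diverge. The paper takes the disjoint union of a ball of measure $\tfrac12$ with a single flat ellipsoid $E(b_\vps)$, $b_\vps=(L_\vps,\dots,L_\vps,\vps)$, of measure $\tfrac12$, and gets the capacity blow-up $\cp(\overline{E(b_\vps)})\gtrsim L_\vps^{d-2}\to\infty$ directly from the explicit elliptic-integral formula \eqref{e18}--\eqref{e19}; no lower bound for the capacity of a union is ever needed, and the volume normalisation is exact. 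You instead attach a widely separated cloud of $N$ small balls with $N\rho^{d}\to0$ and $N\rho^{d-2}\to\infty$, which forces you to prove the quasi-additivity lower bound $\cp(\mathrm{cloud})\gtrsim N\rho^{d-2}$. That bound is not among the properties quoted in the introduction (monotonicity and subadditivity go the wrong way, as you note), but your proposed proof — superpose the single-ball equilibrium measures, spread the centres until the off-diagonal potentials contribute at most $1$ so that $U^{\mu}\le2$ on the cloud, and conclude $\cp\ge\mu(\R^d)/2$ — is a correct and standard potential-theoretic argument, and the exponent bookkeeping ($\rho_N=N^{-1/(d-1)}$ gives $N\rho^{d-2}=N^{1/(d-1)}\to\infty$, $N\rho^{d}\to0$) checks out; the torsion and measure of the cloud vanish, so the torsion factor converges to $T(B_1)^q>0$ for every sign of $q$. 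The trade-off is clear: the paper's route is shorter because the ellipsoid formula does the analytic work, while yours is more elementary in the sense of not needing \eqref{e18}, at the cost of an auxiliary lemma on capacities of well-separated unions that you would have to write out in full.
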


\medskip

\begin{theorem}\label{the2}
\begin{enumerate}
\item[\textup{(i)}]If $q<1,$ then
$$\sup\{G_q(\Om)\ :\ \Om\textup{ open, bounded and convex}\}=+\infty.$$
\item[\textup{(ii)}]If $q\ge 1,$ then
\begin{align}\label{e25}
\sup\{G_q(\Om)\ :\ &\Om\textup{ open, bounded and convex}\}\nonumber\\
&\le\frac{2^{(d+2)/2}d^{3q-2+d(q+1)}}{d-2}G_q(B_1).
\end{align}
If $q>1$, then the variational problem in the left-hand side of \eqref{e25} has a maximiser, say $\Om^+$.
For any such maximiser,
\begin{equation}\label{e26}
\frac{\diam(\Om^+)}{r(\Om^+)}\le2d\bigg(\frac{2^{(d+2)/2}d^{(1+q)d+3q-2}}{d-2}\bigg)^{d/(2(q-1))}.
\end{equation}
\item[\textup{(iii)}]If $q=1$ and $d=3$, then the variational problem in the left-hand side of \eqref{e25} has a maximiser, say $\Om^+$. For any such maximiser,
\begin{equation}\label{e26a}\frac{\diam(\Om^+)}{r(\Om^+)}\le 2\cdot3^8e^{3^7}.
\end{equation}
\end{enumerate}
\end{theorem}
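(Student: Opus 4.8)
The plan is to reduce everything to two geometric estimates valid for any bounded convex $\Om\sq\R^d$ with inradius $r=r(\Om)$ and incentre $x_0$. First, since $\Om$ is convex the distance function $\rho(x)=\mathrm{dist}(x,\partial\Om)$ is concave, so $\Delta\rho\le0$ in $\Om$ and $|\nabla\rho|=1$ a.e.; hence $w=r\rho-\tfrac12\rho^2$ satisfies $-\Delta w=|\nabla\rho|^2-(r-\rho)\Delta\rho\ge1$ with $w\ge0=u_\Om$ on $\partial\Om$, so by the maximum principle $u_\Om\le w\le\tfrac12 r^2$ and therefore $T(\Om)=\|u_\Om\|_1\le\tfrac12 r^2|\Om|$. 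Second, writing $\Om_r=\{x:\mathrm{dist}(x,\Om)<r\}=\Om\oplus B_r(0)$ and using $B_r(0)\sq\Om-x_0$ together with $\Om\oplus\Om=2\Om$, one gets $\Om_r\sq2\Om-x_0$, so $|\Om_r|\le2^d|\Om|$; testing the capacity with a function equal to $1$ on $\Om$ decaying linearly across the shell $\Om_r\sm\Om$ yields
$$\cp(\ov\Om)\le\frac{|\Om_r|-|\Om|}{r^2}\le\frac{2^d}{r^2}|\Om|.$$

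For part (ii) I would combine these: for $q\ge1$,
$$G_q(\Om)\le\frac{\big(2^d r^{-2}|\Om|\big)\big(\tfrac12 r^2|\Om|\big)^q}{|\Om|^{1+q+2(q-1)/d}}=2^{d-q}\big(r\,|\Om|^{-1/d}\big)^{2(q-1)}\le2^{d-q}\omega_d^{-2(q-1)/d},$$
the last step using the incircle bound $\omega_d r^d\le|\Om|$ and $q\ge1$. Rewriting through $G_q(B_1)=\frac{d(d-2)}{d^q(d+2)^q}\omega_d^{-2(q-1)/d}$ shows $2^{d-q}\omega_d^{-2(q-1)/d}$ is dominated by the right-hand side of \eqref{e25} (indeed far smaller, the factor $d^{d(q+1)}$ there reflecting a cruder reduction), which is thus proved; finiteness fails precisely at $q<1$ because the exponent $2(q-1)$ turns negative and $r|\Om|^{-1/d}$ can be arbitrarily small. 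For part (i) I would realise this failure on the convex prolate spheroids $E(a)$ with $a_1=L\to\infty$, $a_2=\cdots=a_d=\rho$ at fixed volume: inserting \eqref{e9} and \eqref{e20} and estimating $\mathfrak{e}(a)$ by splitting the integral \eqref{e19} at the scales $t\sim\rho^2$ and $t\sim L^2$ gives $G_q(E(a))$ of order $\rho^{2(q-1)}$ (with an extra $\log(1/\rho)$ in the denominator when $d=3$), which tends to $+\infty$ as $\rho\to0$ for every $q<1$.

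For the existence of a maximiser when $q>1$, normalise $|\Om|=1$ and take a maximising sequence. Since $q>1$ the displayed bound forces $r(\Om)\ge c>0$ along near-maximisers, while a convex body of inradius $r$ and diameter $D$ contains the cone over its incircle from a farthest point, so $|\Om|\ge\frac{\omega_{d-1}}{2d}r^{d-1}D$ and $D$ is bounded above. The sequence thus lies, after translation, in a fixed ball with inradii bounded below, and Blaschke selection produces a convergent subsequence whose limit $\Om^+$ is a genuine convex body; continuity of $|\Om|$, $T$ and $\cp$ under Hausdorff convergence of convex bodies then gives $G_q(\Om^+)=\sup$. Inserting the inradius lower bound (obtained from $G_q(\Om^+)\ge G_q(B_1)$ against the upper bound) into the cone volume inequality produces the explicit ratio bound \eqref{e26}.

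Part (iii) is the delicate case and its proof is the main obstacle: at $q=1$ the factor $\big(r|\Om|^{-1/d}\big)^{2(q-1)}$ degenerates to $1$, so the crude estimates no longer control the inradius and one must exploit the logarithmic nature of Newtonian capacity in $\R^3$. The key step I would establish is a sharpened capacity bound $\cp(\ov\Om)\le C|\Om|\,r^{-2}/\log(\diam(\Om)/r)$ for elongated convex sets in $\R^3$, most cleanly by comparison with a circumscribed spheroid and the elliptic-integral formula \eqref{e20}, again splitting \eqref{e19} to extract the logarithm; combined with $T(\Om)\le\tfrac12 r^2|\Om|$ this yields $G_1(\Om)\le C'/\log(\diam(\Om)/r)$, so $G_1\to0$ along any sequence with $\diam/r\to\infty$. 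Since $G_1(B_1)>0$, a maximising sequence has $\diam/r$ bounded and compactness proceeds as before, while quantifying $G_1(\Om^+)\ge G_1(B_1)$ against the logarithmic bound accounts for the doubly-exponential constant in \eqref{e26a}. The hardest points to make rigorous are precisely this logarithmic capacity estimate and the continuity of $\cp$ and $T$ along the Hausdorff-convergent sequences.
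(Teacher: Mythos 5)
Your route to the upper bound in (ii) is genuinely different from the paper's and is, in its core inequality, correct. The paper runs everything through John's ellipsoid: it sandwiches $\Om$ between $E(a/d)$ and $E(a)$, bounds $\cp(\overline{E(a)})\le \frac{2^{(d+2)/2}\kappa_d}{d-2}\big(\prod_i a_i\big)a_d^{-2}$ via a lower bound on the elliptic integral $\mathfrak{e}(a)$, and pairs this with $T(E(a))\le d\tau_d\big(\prod_i a_i\big)a_d^2$. You instead use two intrinsic estimates, $T(\Om)\le\tfrac12 r^2|\Om|$ (via the concave distance function) and $\cp(\overline{\Om})\le 2^d|\Om|/r^2$ (via the linear cutoff on $\Om\oplus B_r$ and $\Om\oplus\Om=2\Om$); both are sound (the cutoff needs the routine adjustment to be $\ge1$ on a neighbourhood of $\overline\Om$ and to be mollified to $C^1$, and $\Delta\rho\le0$ holds distributionally). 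The resulting bound $G_q(\Om)\le 2^{d-q}\big(r|\Om|^{-1/d}\big)^{2(q-1)}$ is cleaner and sharper than \eqref{e45}, and one can check that $2^{d-q}\omega_d^{-2(q-1)/d}$ is indeed below the right-hand side of \eqref{e25} for all $q\ge1$, $d\ge3$ — but since the theorem asserts that \emph{specific} constant, and likewise the specific ratio bound \eqref{e26}, you must actually carry out these constant comparisons rather than assert them; your \eqref{e26}-type bound involves $\omega_{d-1}$ from the cone inequality and is not term-by-term identical to the stated one. Part (i) is essentially the paper's construction (degenerate prolate spheroids), and the compactness argument for $q>1$ matches the paper's (Blaschke selection plus continuity of measure, torsion and capacity under Hausdorff convergence of convex bodies with inradius bounded below).

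The genuine gap is in part (iii). You correctly identify that at $q=1$ one must extract the logarithm from the Newtonian capacity in $\R^3$, and your proposed lemma $\cp(\overline\Om)\le C|\Om|r^{-2}/\log(\diam(\Om)/r)$ is true and provable by exactly the elliptic-integral splitting the paper uses (the paper's version is $\mathfrak{e}(a_1,a_2,a_3)\ge\frac{2}{a_1}\log(a_1/a_2)$ combined with $\cp(\overline\Om)\le\kappa_3a_1$, giving $G_1(\Om)\le 3^7G_1(B_1)\frac{a_3}{a_2}\min\{1,(\log(a_1/a_2))^{-1}\}$, which separates the two degeneration regimes $a_1\gg a_2$ and $a_2\gg a_3$ rather than packaging them into a single logarithm). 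But you have only named this lemma as ``the key step,'' not proved it, and — more seriously for the statement as written — \eqref{e26a} asserts the explicit bound $\diam(\Om^+)/r(\Om^+)\le 2\cdot3^8e^{3^7}$. Your method yields a bound of the form $e^{C'/(2G_1(B_1))}$ with $G_1(B_1)=1/5$, i.e.\ a constant of the same doubly-exponential character but depending on the unspecified $C$ in your capacity lemma; without tracking that constant there is no guarantee it does not exceed $2\cdot 3^8e^{3^7}$, in which case you would have proved a correct existence statement but not the quantitative bound \eqref{e26a} as stated. To close the proof you need either to prove your logarithmic capacity lemma with an explicit constant and verify the comparison, or to follow the paper's sharper two-case estimate, which delivers $a_3\ge3^{-7}a_2$ and $a_1\le a_2e^{3^7}$ directly.
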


\medskip

\begin{theorem}\label{the3}
\begin{enumerate}
\item[\textup{(i)}]If $q>(d-2)/(2(d-1))$, then
\begin{equation}\label{e29}
\inf\{G_q(\Om)\ :\ \Om\textup{ open, bounded and convex}\}=0.
\end{equation}
\item[\textup{(ii)}]If $0<q\le (d-2)/(2(d-1))$, then
\begin{equation}\label{e30}
\inf\{G_q(\Om)\ :\ \Om\textup{ open, bounded and convex}\}\ge\frac{1}{2d^{d+(d+2)q}}G_q(B_1).
\end{equation}
If $0<q<(d-2)/(2(d-1))$, then the variational problem in the left-hand side of \eqref{e30} has a convex minimiser, say $\Om^-$. For any such minimiser,
\begin{equation}\label{e31}
\frac{\diam(\Om^-)}{r(\Om^-)}\le 2d\big(2d^{d+(d+2)q}\big)^{\frac{d(d-1)}{d-2-2q(d-1)}}.
\end{equation}
\end{enumerate}
\end{theorem}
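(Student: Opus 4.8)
The plan is to exploit scale invariance and split convex sets according to their eccentricity $D/r$, where $r=r(\Om)$ and $D=\diam(\Om)$. Since $\Om$ is convex it contains an inscribed ball $B_r(x_0)$, so its minimal width $w(\Om)$ obeys $2r\le w(\Om)\le c_d\,r$, the upper bound being Steinhagen's inequality. Part (i) is the easy direction: I would test on the oblate ellipsoids $E=E(R,\dots,R,\vps)$ with $R$ fixed and $\vps\downarrow0$. From \eqref{e9} and \eqref{e20} one reads off $T(E)\sim\frac{\omega_d}{d+2}R^{d-1}\vps^{3}$, $\cp(\overline{E})\sim c\,R^{d-2}$ and $|E|=\omega_d R^{d-1}\vps$, hence $G_q(E)\sim c'\,\vps^{(2q(d-1)-(d-2))/d}$. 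The exponent is positive exactly when $q>(d-2)/(2(d-1))$, so $G_q(E)\to0$, giving \eqref{e29}.

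The heart of part (ii) is a capacity lower bound that grows with the spread of $\Om$:
\[
\cp(\overline\Om)\ge c_d\Big(\tfrac{|\Om|}{r}\Big)^{(d-2)/(d-1)}.
\]
I would prove it by slicing. Taking $u$ in the direction of minimal width, the identity $|\Om|=\int \H^{d-1}(\Om\cap\{\langle x,u\rangle=s\})\,ds\le w(\Om)\,\max_s\H^{d-1}(\Om\cap\{\langle x,u\rangle=s\})$ produces a hyperplane section $S\subseteq\Om$ with $\H^{d-1}(S)\ge |\Om|/w(\Om)\ge|\Om|/(c_d r)$. Since $\overline S\subseteq\overline\Om$, monotonicity \eqref{e15} gives $\cp(\overline\Om)\ge\cp(\overline S)$, and the isoperimetric inequality for the Newtonian capacity of flat $(d-1)$-dimensional sets (the flat disc minimises capacity for prescribed $\H^{d-1}$-measure) gives $\cp(\overline S)\ge c_d(\H^{d-1}(S))^{(d-2)/(d-1)}$; combining these yields the display.

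For torsion I would use the self-contained bound $T(\Om)\ge c_d'\,r^2|\Om|$ with $c_d'=2^{-(d+3)}d^{-1}$. Writing $\rho=\mathrm{dist}(\cdot,\partial\Om)$, the inclusion $B_{\rho(x)}(x)\subseteq\Om$ together with the pointwise torsion monotonicity stated before \eqref{e4} gives $u_\Om(x)\ge u_{B_{\rho(x)}(x)}(x)=\rho(x)^2/(2d)$; integrating and using $\{\rho\ge r/2\}\supseteq x_0+\tfrac12(\Om-x_0)$ (valid by convexity, since $B_{r/2}(x_0+\tfrac12(z-x_0))=\tfrac12 z+\tfrac12 B_r(x_0)\subseteq\Om$ for $z\in\Om$), whose volume is $\ge2^{-d}|\Om|$, bounds $T=\int_\Om u_\Om$ from below. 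Substituting both estimates and $T^q$ into $G_q$ collapses the powers of $r$ and $|\Om|$ to $r^{2q-(d-2)/(d-1)}|\Om|^{(d-2-2q(d-1))/(d(d-1))}$; the exponent of $|\Om|$ is $\ge0$ precisely on the range $q\le(d-2)/(2(d-1))$ of part (ii). Using $|\Om|\ge\omega_d r^d$ cancels the $r$-powers and gives the scale-invariant constant \eqref{e30}. When $q<(d-2)/(2(d-1))$ I would instead insert the spindle bound $|\Om|\ge c_d\,r^{d-1}D$ (convex hull of $B_r(x_0)$ with the farther endpoint of a diameter), turning the same computation into $G_q(\Om)\ge c_d(D/r)^{(d-2-2q(d-1))/(d(d-1))}$; along a minimising sequence this bounds $D/r$ by the quantity in \eqref{e31}. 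After normalising $r=1$ and translating, the sequence lies in a fixed ball, Blaschke selection extracts a convex Hausdorff limit $\Om^-$, and continuity of $\cp$, $T$ and $|\cdot|$ on convex bodies of bounded eccentricity makes $\Om^-$ a minimiser.

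The step I expect to be hardest is the capacity estimate, specifically justifying the flat isoperimetric capacity inequality for the section $S$ with a constant sharp enough to reproduce the explicit factor $2d^{d+(d+2)q}$ in \eqref{e30}, and checking that the slicing, width, and symmetrisation steps simultaneously reduce to equality on the ball so that no factor is wasted. A secondary technical point is the continuity, in the appropriate direction, of the three functionals along the Blaschke-convergent minimising sequence, which must be verified to conclude that $\Om^-$ attains the infimum.
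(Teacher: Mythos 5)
Your part (i) is essentially the paper's argument: both test $G_q$ on degenerate oblate ellipsoids $E(a_1,\dots,a_1,a_d)$ with the short axis tending to $0$, and the exponent $(2q(d-1)-(d-2))/d$ you extract is exactly what the paper obtains after normalising $a_1^{d-1}a_d=1$; the paper even gets by with the cruder bound $\cp(\overline{E(a)})\le\kappa_d a_1^{d-2}$ from $E(a)\subset B_{a_1}$, so your capacity asymptotic is more than is needed. For part (ii), however, you take a genuinely different route. The paper stays entirely inside the John's ellipsoid calculus: via \eqref{e33} it bounds $\cp(\overline\Om)\ge\cp(\overline{E(a/d)})$ and $T(\Om)\ge T(E(a/d))$, estimates the elliptic integral \eqref{e19} from above by an explicit computation culminating in the $\log(ea_{d-2}/a_{d-1})$ bound \eqref{e64}, and then reduces \eqref{e66} with the normalisation $\prod_i a_i=1$ to the single-variable bound \eqref{e68}, from which \eqref{e30} and the eccentricity estimate \eqref{e69}--\eqref{e70} both drop out. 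You replace all of this by three structural lemmas: a sliced capacity bound $\cp(\overline\Om)\ge c_d(|\Om|/r)^{(d-2)/(d-1)}$ obtained from Steinhagen plus monotonicity \eqref{e15} applied to a fat hyperplane section, the distance-function torsion bound $T(\Om)\ge 2^{-(d+3)}d^{-1}r^2|\Om|$ (your convexity argument for $\{\rho\ge r/2\}$ is correct), and the spindle bound $|\Om|\ge c_d r^{d-1}\diam(\Om)$. Your exponent bookkeeping checks out: the power of $|\Om|$ is $(d-2-2q(d-1))/(d(d-1))$, nonnegative exactly on the stated range, and the $r$-powers cancel after inserting $|\Om|\ge\omega_d r^d$, while the spindle bound converts the strict inequality $q<(d-2)/(2(d-1))$ into a bound on $\diam/r$ along a minimising sequence, after which the Blaschke compactness step is the same as the paper's. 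What your approach buys is independence from the explicit ellipsoid formulae \eqref{e9} and \eqref{e18}; what the paper's buys is completely elementary, self-contained estimates with explicit constants.

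The one genuine IOU is the isoperimetric inequality for the Newtonian capacity of flat $(d-1)$-dimensional sets, which you correctly identify as the hardest step. It is true, but it is not a one-liner: either one invokes the Pólya--Szeg\H{o} symmetrisation principle (Steiner symmetrisation within the hyperplane decreases capacity, iterated to converge to the flat disc, with continuity of capacity along the convex sections you actually use), or, for convex sections, one applies John's ellipsoid in dimension $d-1$ and the degenerate-ellipsoid capacity formula --- which largely recreates the paper's estimate of $\mathfrak{e}(a)$. So this lemma is of comparable difficulty to what it replaces and must be supplied, with an explicit constant. Relatedly, your route yields \eqref{e30} and \eqref{e31} with \emph{some} positive dimensional constants, but not automatically the specific values $\tfrac{1}{2}d^{-d-(d+2)q}G_q(B_1)$ and $2d\big(2d^{d+(d+2)q}\big)^{d(d-1)/(d-2-2q(d-1))}$ asserted in the statement; to prove the theorem as written you would need to track the constants from Steinhagen, the flat-disc capacity, and the torsion bound and verify they dominate the stated ones. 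These are quantitative rather than conceptual gaps, but they are the difference between proving the qualitative content of Theorem \ref{the3}(ii) and proving \eqref{e30}--\eqref{e31} themselves.
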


We were unable to prove the existence or non-existence of a maximiser for the left-hand side of \eqref{e25} for $q=1$ and $d>3$. In these higher-dimensional cases there is a lack of compactness. For example if
$$a_{\vps}=(\underbrace{1,...,1,}_\text{$d-k$}\underbrace{\vps,...,\vps}_\text{$k$}),$$ and if $k\ge 3,$ then $\lim_{\vps\to0}G_1(E(a_{\vps}))$ exists and is strictly positive.  Similarly we were unable to prove the existence of a minimiser of the left-hand side of \eqref{e30} at the critical point $q=(d-2)/(2d-2)$.

\medskip
The proofs of Theorems \ref{the1}, \ref{the2}, and \ref{the3} are deferred to Sections \ref{sec4}, \ref{sec2}, and \ref{sec3} respectively. A key ingredient in these proofs is John's ellipsoid theorem \cite{J}. This theorem asserts that for any open, bounded convex set $\Om$ in $\R^d$ there exists a translation and rotation of $\Om$, again denoted by $\Om$, and an open ellipsoid $E(a)$ such that
\begin{equation}\label{e33}
E(a/d)\subset\Om\subset E(a).
\end{equation}
Moreover, among all ellipsoids in $\Om$, $E(a/d)$ has maximal measure.

Finally in Section \ref{sec5} we discuss the optimisation of a functional over all open bounded planar convex sets with fixed measure, and which involves the logarithmic capacity and torsional rigidity.

\section{Proof of Theorem \ref{the1}\label{sec4}}

\begin{proof}
To prove the assertion under (i) we let $\Om$ be the disjoint union of an open ball $B'$ of measure $1/2$ and an open ellipsoid $E(b_\vps)$, with $b_\vps=(L_\vps,\dots,L_\vps,\vps)$, of measure $1/2$, where
$$L_\vps=(2\omega_d\vps)^{1/(1-d)}.$$
We have, by \eqref{e18} and \eqref{e19},
\begin{align}
\cp\big(\overline{E(b_\vps})\big)&=\frac{4\pi^{d/2}}{\Gamma(d/2)}\Big(\int_0^\infty dt\,(L^2_\vps+t)^{(1-d)/2}(\vps^2+t)^{-1/2}\Big)^{-1}\nonumber\\ &\ge
\frac{4\pi^{d/2}}{\Gamma(d/2)}\Big(\int_0^\infty\, dt\,(L^2_\vps+t)^{(1-d)/2}t^{-1/2}\Big)^{-1}\nonumber\\ &
=\frac{4\pi^{(d-1)/2}\Gamma((d-1)/2)}{\Gamma(d/2)\Gamma((d-2)/2)}L_{\vps}^{d-2},\nonumber
\end{align}
where we have used formulae 8.380.3 and 8.384.1 in \cite{GR}.
Hence
\[\begin{split}
G_q(\Om)&=\cp\big(\overline{B'}\cup \overline{E(b_\vps)}\big)T\big(B'\cup E(b_\vps)\big)^q\\
&\ge\cp\big(\overline{E(b_\vps)}\big)T(B')^q\\
&\ge\frac{4\pi^{(d-1)/2}\Gamma((d-1)/2)}{\Gamma(d/2)\Gamma((d-2)/2)}T(B')^q(2\omega_d\vps)^{(d-2)/(1-d)},
\end{split}\]
which tends to $+\infty$ as $\vps\downarrow0.$

To prove the assertion under (ii), we recall \eqref{e12}, and infer that $T^q(\Om)\ge T^q(\Om^*)$ for $q\le0$. This implies \eqref{e28} by \eqref{e13} and \eqref{e22}.

To prove (iii), we let $Q\subset\R^d$ be a cube with $|Q|=1$. Let $N\in\N$ be arbitrary. The cube $Q$ contains $N^d$ open disjoint cubes each of measure $N^{-d}$. Each open cube contains an open ball with radius $1/(2N)$. Let $Q_N$ be the union of these $N^d$ open balls. Since $Q_N\subset Q$ we have $\cp(\overline{Q_N})\le\cp(\overline{Q})$. On the other hand, additivity and scaling properties of the torsion give
$$T(Q_N)=N^d(2N)^{-(d+2)}T(B_1)=2^{-d-2}N^{-2}T(B_1).$$
Furthermore,
$$|Q_N|=\frac{\omega_d}{2^d}. $$
Hence
\begin{align*}
\inf\big\{G_q(\Om)\ :\ \Om\textup{ open and bounded}\big\}
&\le\frac{\cp(\overline{Q})T^q(Q_N)}{|Q_N|^{1+q+2(q-1)/d}}\nonumber\\
&=\frac{2^{d-2}}{\omega_d^{1+q+2(q-1)/d}}\cp(\overline{Q})T^q(B_1)N^{-2q}.
\end{align*}
This implies \eqref{e29} since $q>0$, and $N\in\N$ was arbitrary.
\end{proof}

\section{Proof of Theorem \ref{the2} \label{sec2}}

\begin{proof}
To prove (i) we consider the open ellipsoid $E(a_\vps)$ with $a_\vps=(1,\vps,\dots,\vps)$. We have
$$|E(a_\vps)|=\omega_d\vps^{d-1},$$
$$T\big(E(a_\vps)\big)=\frac{\omega_d}{d+2}\frac{\vps^{d+1}}{d-1+\vps^2},$$
\begin{equation*}
\cp\big(\overline{E(a_\vps})\big)=\begin{cases}4\pi\varepsilon\big(\log(\varepsilon^{-1})\big)^{-1}(1+o(1)),&d=3,\ \varepsilon\downarrow0,\\
\ds\frac{2\pi^{d/2}(d-3)}{\Gamma(d/2)}\varepsilon^{d-3}(1+o(1)),&d>3,\ \varepsilon\downarrow0,
\end{cases}
\end{equation*}
where we have used the formulae on p.260 in \cite{IMcK}. Hence
$$
G_q\big(E(a_\vps)\big)=\begin{cases}
C_3\vps^{2(q-1)/3}\big(\log\vps^{-1}\big)^{-1}(1+o(1)),&d=3,\ \varepsilon\downarrow0,\\
C_d\vps^{2(q-1)/d}(1+o(1)),&d>3,\ \varepsilon\downarrow0.
\end{cases}
$$
where $C_d$ is a positive constant depending only on $d$. Since $q<1$ we obtain the desired result by letting $\vps\downarrow0$.

To prove (ii) we first observe that the formulae for $|E(a)|, \cp(\overline{E(a)}),$ and $T(E(a))$ are symmetric in the $a_i$'s. Without loss of generality we may therefore assume here, and throughout this paper, that $a_1\ge a_2\ge...\ge a_d$. By inclusion,
and \eqref{e33} we have
\begin{equation}\label{e34}
d^{-d}\omega_d\prod_{i=1}^d a_i=|E(a/d)|\le|\Om|\le|E(a)|=\omega_d\prod_{i=1}^d a_i,
\end{equation}
and
\begin{equation}\label{e35}
T(\Om)\ge T(E(a/d))=\frac{\omega_d}{d^{d+2}(d+2)}\bigg(\prod_{i=1}^d a_i\bigg)\bigg(\sum_{i=1}^d\frac{1}{a_i^2}\bigg)^{-1}\ge\frac{\tau_d}{d^{d+2}}\bigg(\prod_{i=1}^d a_i\bigg)a_d^2.
\end{equation}
We have by \eqref{e18},
\begin{align}\label{e42}
\mathfrak{e}(a)
&\ge\int_0^{a_d^2}dt\,\bigg(\prod_{i=1}^d\big(a_i^2+t\big)\bigg)^{-1/2}\nonumber\\
&\ge a_d^2\bigg(\prod_{i=1}^d\big(a_i^2+a_d^2\big)\bigg)^{-1/2}\nonumber\\
&\ge2^{-d/2}a_d^2\bigg(\prod_{i=1}^d a_i^2\bigg)^{-1/2}\nonumber \\ &
=2^{-d/2}\bigg(\prod_{i=1}^d a_i\bigg)^{-1}a_d^2.
\end{align}
By \eqref{e19}, \eqref{e20} and \eqref{e42}, taking into account that $\Gamma(z+1)=z\Gamma(z)$, $z>0$,
\begin{equation}\label{e43}
\cp(\overline{\Om})\le \cp(\overline{E(a)})\le \frac{2^{(d+2)/2}\kappa_d}{d-2}\bigg(\prod_{i=1}^d a_i\bigg)a_d^{-2}.
\end{equation}
By \eqref{e4} and \eqref{e9},
\begin{equation}\label{e44}
T(\Om)\le T(E(a))\le d\tau_d\bigg(\prod_{i=1}^d a_i\bigg)a_d^2.
\end{equation}
By \eqref{e34}, \eqref{e43}, \eqref{e44}, $a_1\ge a_2\ge...\ge a_d$, and $q>1$ we obtain,
\begin{align}\label{e45}
G_q(\Om)
&\le\frac{\cp(\overline{E(a)})T(E(a))^q}{|E(a/d)|^{1+q+2(q-1)/d}}\nonumber\\
&\le\frac{2^{(d+2)/2}\kappa_d}{d-2}\big(d\tau_d\big)^q\big(d^{-d}\omega_d\big)^{-(1+q+2(q-1)/d)}\bigg(\prod_{i=1}^d a_i\bigg)^{2(1-q)/d}a_d^{2q-2}\nonumber\\
&=\frac{2^{(d+2)/2}d^{3q-2+d(q+1)}}{d-2}G_q(B_1)\bigg(\prod_{i=1}^d a_i\bigg)^{2(1-q)/d}a_d^{2q-2}\nonumber\\
&\le\frac{2^{(d+2)/2}d^{3q-2+d(q+1)}}{d-2}G_q(B_1).
\end{align}
This proves \eqref{e25}.

To prove the existence of a maximiser, we observe that if the left-hand side of \eqref{e25} equals $G_q(B_1)$ then $B_1$ is a maximiser which satisfies \eqref{e26}.
If the left-hand side of \eqref{e25} is strictly greater than $G_q(B_1)$, we let $\Om$ be bounded, open, and convex, and such that
\begin{equation}\label{e46a}
G_q(\Om)> G_q(B_1).
\end{equation}
By the third inequality in \eqref{e45}, \eqref{e46a}, $q>1$, and $a_1\ge a_2\ge...\ge a_d$, we find that
\begin{equation}\label{e47}
a_1\le \beta_d^{d/(2(q-1))} a_d,
\end{equation}
where $\beta_d$ is the coefficient of $G_q(B_1)$ in the right-hand side of \eqref{e45}. Since
\begin{equation}\label{e48}
\diam(\Om)\le\diam(E(a))\le 2a_1,
\end{equation}
and
\begin{equation}\label{e49}
r(\Om)\ge r(E(a/d))=\frac{a_d}{d},
\end{equation}
we obtain by \eqref{e47}--\eqref{e49},
\begin{equation}\label{e50}
\frac{\diam(\Om)}{r(\Om)}\le 2d\bigg(\frac{2^{(d+2)/2}d^{3q-2+d(q+1)}}{d-2}\bigg)^{d/(2(q-1))}.
\end{equation}

Let $(\Om_n)$ be a maximising sequence for the left-hand side of \eqref{e25}. Since this supremum is scaling invariant we fix $r(\Om_n)=1$. By \eqref{e50}, $\diam(\Om_n)\le L$ for some $L<\infty$, and for all $n$.
By taking translations of $(\Om_n)$ these translates are contained in a closed ball $B_L$ of radius $L$. Since the Hausdorff metric is compact on the space of convex, compact sets in $B_L,$ there exists a subsequence of $(\overline{\Om_n})$, again denoted by $(\overline{\Om_n})$ which converges in the Hausdorff (and in the complementary Hausdorff) metric to an element say $\tilde{\Om}^+$. Set $\Om^+=\textup{int}(\tilde{\Om}^+).$ Note that $\Om^+$ is an open, bounded, convex set which is non-empty since $\Om^+$ has inradius $1$. Furthermore measure, torsion, capacity, and diameter are all continuous with respect to this metric. Hence
$$G_q(\Om^+)=\lim_{n\to\infty}G_q(\Om_n),$$
and $\Om^+$ is a maximiser which satisfies \eqref{e26}.

To prove (iii) we let $q=1$ and $d=3$.
Let $\Om$ be an element of a maximising sequence. We may assume that
\begin{equation}\label{e36f}
G_1(B_1)\le G_1(\Om)\le\frac{\cp\big(\overline{E(a)}\big)T\big(E(a)\big)}{\big|E(a/3)\big|^2}.
\end{equation}
We obtain an upper bound on $\cp(\overline{E(a)})$ by obtaining a lower bound on $\mathfrak{e}(a)$. By \eqref{e19}, we have
\begin{align}\label{e36a}
\mathfrak{e}(a_1,a_2,a_3)&\ge\int_0^{\infty}\big(a_1^2+t\big)^{-1/2}\big(a_2^2+t\big)^{-1}dt\nonumber\\
&=\frac{2}{\big(a_1^2-a_2^2\big)^{1/2}}\log\bigg(\frac{a_1}{a_2}+\Big(\frac{a_1^2}{a_2^2}-1\Big)^{1/2}\bigg)\nonumber\\
&\ge\frac{2}{a_1}\log\big(a_1a_2^{-1}\big).
\end{align}
By \eqref{e18} for $d=3$, and \eqref{e36a},
$$\cp(\overline{E(a)})\le \kappa_3 a_1\big(\log\big(a_1a_2^{-1}\big)\big)^{-1}.$$
Since $\Om\subset B_{a_1}$, we also have
$$\cp(\overline{\Om})\le\cp(\overline{B_{a_1}})=\kappa_3 a_1.$$
Hence
$$\cp(\overline{\Om})\le \kappa_3a_1\min\big\{1,\big(\log\big(a_1a_2^{-1}\big)\big)^{-1}\big\}.$$
In addition,
$$T(E(a))\le3\tau_3a_1a_2a_3^3\;,\qquad|E(a/3)|=3^{-3}\omega_3a_1a_2a_3\;.$$
Summarising, from \eqref{e36f} we obtain
\begin{align}\label{e36g}
G_1(\Om)\le3^7G_1(B_1)\cdot\frac{a_3}{a_2}\cdot\min\{1, \big(\log\big(a_1a_2^{-1}\big)\big)^{-1}\}.
\end{align}
If the supremum in the left-hand side of \eqref{e25} equals $G_1(B_1)$, then $B_1$ is a maximiser which satisfies \eqref{e26a}. If not then we may assume that $G_1(\Om)>G_1(B_1)$. This, together with \eqref{e36g}, yields
$a_3\ge 3^{-7}a_2\;, a_1\le a_2e^{3^7}.$ These inequalities imply that
$a_1/a_3\le 3^7e^{3^7}.$ Hence \eqref{e48} and \eqref{e49} yield  $$\frac{\diam(\Om)}{r(\Om)}\le 2\cdot 3^8e^{3^7}.$$
The remaining part of the proof follows similar lines as those in the proof of (ii).
\end{proof}

\section{Proof of Theorem \ref{the3} \label{sec3}}

\begin{proof}
To prove (i), we consider as $\Om$ the ellipsoid $E(a)$ with
\begin{equation*}
a=(a_1,\dots,a_1,a_d),\qquad a_1\ge a_d,\qquad a_1^{d-1}a_d=1,
\end{equation*}
where $a_d\in(0,1)$ is arbitrary. Since $E(a)\subset B_{a_1}$ we have by \eqref{e15}, \eqref{e16} and \eqref{e20},
\begin{equation}\label{e55}
\cp(\overline{E(a)})\le \kappa_da_1^{d-2}.
\end{equation}
By \eqref{e9} and \eqref{e55},
\begin{align*}
G_q(E(a))\le\frac{\kappa_d a_1^{d-2}}{\omega_d^{1+q+2(q-1)/d}}\Big(\frac{\omega_d}{d+2}\frac{a_1^2a_d^2}{(d-1)a_d^2+a_1^2}\Big)^q\le\frac{\kappa_d a_1^{d-2}a_d^{2q}}{\omega_d^{1+2(q-1)/d}(d+2)^q}.
\end{align*}
Since $a_1=a_d^{-1/(d-1)}$ we have
$$G_q(E(a))\le\frac{\kappa_d}{\omega_d^{1+2(q-1)/d}(d+2)^q}a_d^{-(d-2)/(d-1)+2q},$$
and since $a_d\in(0,1)$ was arbitrary we obtain \eqref{e29}.

To prove (ii) we let $a_1\ge a_2\ge...\ge a_d$. We have
\begin{align}\label{e60}
\cp(\overline{\Om})\ge\cp(\overline{E(a/d)})=d^{2-d}\cp(\overline{E(a)})
=\frac{2\kappa_d}{d^{d-2}(d-2)}\big(\mathfrak{e}(a)\big)^{-1}.
\end{align}
In order to obtain an upper bound on $\mathfrak{e}(a)$ we have by using the inequality $(x^2+t)^{1/2}\ge 2^{-1/2}(x+t^{1/2})$, and the change of variables $t=\theta^2$,
\begin{align}\label{e61}
\mathfrak{e}(a)&\le \bigg(\prod_{i\le d-3}a_i^{-1}\bigg)\int_0^{\infty}dt\,\big((a_{d-2}^2+t)(a_{d-1}^2+t)(a_{d}^2+t)\big)^{-1/2}\nonumber \\ &
\le\bigg(\prod_{i\le d-3}a_i^{-1}\bigg)\int_0^{\infty}dt\,\big((a_{d-2}^2+t)(a_{d-1}^2+t)t\big)^{-1/2}\nonumber \\ &
\le 2\bigg(\prod_{i\le d-3}a_i^{-1}\bigg)\int_0^{\infty}dt\,\big((a_{d-2}+t^{1/2})(a_{d-1}+t^{1/2})t^{1/2}\big)^{-1}\nonumber \\ &
=4\bigg(\prod_{i\le d-3}a_i^{-1}\bigg)\int_0^{\infty}d\theta\,\big((a_{d-2}+\theta)(a_{d-1}+\theta)\big)^{-1}\nonumber \\ &
=4\bigg(\prod_{i\le d-2}a_i^{-1}\bigg)\Big(1-\frac{a_{d-1}}{a_{d-2}}\Big)^{-1}\log(a_{d-2}/a_{d-1}),
\end{align}
where the product over the empty set in the right-hand side of \eqref{e61} is defined to be equal to $1$, and where the case $a_{d-2}=a_{d-1}$ follows by taking the appropriate limit in the right-hand side of \eqref{e61}.
It is elementary to verify that
$$(1-x)^{-1}\log (x^{-1})\le \log(e/x), \quad 0<x< 1, $$
and
$$\lim_{x\uparrow 1}(1-x)^{-1}\log (x^{-1})=1.$$
This gives by \eqref{e61},
\begin{align}\label{e64}
\mathfrak{e}(a)\le4\bigg(\prod_{i\le d-2}a_i^{-1}\bigg)\log(ea_{d-2}/a_{d-1}).
\end{align}
Hence by \eqref{e60} and \eqref{e64},
\begin{equation}\label{e65}
\cp(\overline{\Om})
\ge\frac{\kappa_d}{2d^{d-2}(d-2)}\bigg(\prod_{i\le d-2}a_i\bigg)\big(\log(ea_{d-2}/a_{d-1})\big)^{-1}.
\end{equation}
By \eqref{e34}, \eqref{e35}, and \eqref{e65},
\begin{equation}\label{e66}
G_q(\Om)\ge \frac{1}{2d^{d-2+(d+2)q}(d-2)}G_q(B_1)\frac{a_d^{2q-1}}{a_{d-1}}\bigg(\prod_{i=1}^da_i\bigg)^{2(1-q)/d}\big(\log(ea_{d-2}/a_{d-1})\big)^{-1}.
\end{equation}
The $a$-dependence in the right-hand side of \eqref{e66} is scaling invariant. It is convenient to choose $\prod_{i=1}^da_i=1$. We then have
$$\frac{a_{d-2}}{a_{d-1}}=\bigg(\prod_{i\le d-3}a_i^{-1}\bigg)a_{d-1}^{-2}a_d^{-1}\le a_{d-1}^{-(d-1)}a_d^{-1}.$$
This gives with \eqref{e66},
\begin{equation}\label{e67}
G_q(\Om)\ge \frac{1}{2d^{d-2+(d+2)q}(d-2)}G_q(B_1)\frac{a_d^{2q-1}}{a_{d-1}}\big(\log\big(e/(a_{d-1}^{d-1}a_d)\big)\big)^{-1}.
\end{equation}
Since
$$x^{1/(d-1)}\log(e/x)\le d-1,\quad 0<x<1,$$
we have, with $x=a_{d-1}^{d-1}a_d$,
$$\big(\log\big(e/(a_{d-1}^{d-1}a_d)\big)\big)^{-1}\ge a_{d-1}a_d^{1/(d-1)}(d-1)^{-1}.$$
This, together with \eqref{e67}, gives
\begin{align}\label{e68}
G_q(\Om)&\ge \frac{1}{2d^{d-2+(d+2)q}(d-2)(d-1)}G_q(B_1)a_d^{2q-\frac{d-2}{d-1}}\nonumber \\ &
\ge \frac{1}{2d^{d+(d+2)q}}G_q(B_1)a_d^{2q-\frac{d-2}{d-1}}.
\end{align}
This proves \eqref{e30} since $a_d\in(0,1]$, and $q\le(d-2)/(2(d-1))$.

To prove the existence of a minimiser, we observe that if the left-hand side of \eqref{e30} equals $G_q(B_1)$ then $B_1$ is a minimiser which satisfies \eqref{e31}.
If the left-hand side of \eqref{e30} is strictly less than $G_q(B_1)$, we let $\Om$ be bounded and convex such that
\begin{equation}\label{e46}
G_q(\Om)< G_q(B_1).
\end{equation}
By \eqref{e68} and \eqref{e46} we infer
\begin{equation}\label{e69}
a_d\ge \bigg(\frac{1}{2d^{d+(d+2)q}}\bigg)^{(d-1)/(d-2-2q(d-1))}.
\end{equation}
Since $\prod_{i=1}^da_i=1$, and $a_1\ge a_2\ge...\ge a_d$ we have $a_1\le a_d^{1-d}$. By \eqref{e48}, \eqref{e49} and \eqref{e69} we obtain,
\begin{equation}\label{e70}
\frac{\diam(\Om)}{r(\Om)}\le 2da_d^{-d}\le 2d\big(2d^{d+(d+2)q}\big)^{\frac{d(d-1)}{d-2-2q(d-1)}}.
\end{equation}
The proof of the existence of a minimiser is similar to the proof of the existence of a maximiser in Theorem \ref{the1}(ii), and has been omitted. If $\Om^-$ is a minimiser then, by continuity of diameter and inradius,
$\Om^-$ satisfies \eqref{e70}. This proves \eqref{e31}.
\end{proof}

\section{The logarithmic capacity \label{sec5}}

We briefly recall some basic properties of the logarithmic capacity of a compact set $K$ in $\R^2$.
Let $\mu$ be a probability measure supported on $K$, and let

\begin{equation*}
I(\mu)=\iint_{K\times K}\log\Big(\frac{1}{|x-y|}\Big)\mu(dx)\mu(dy).
\end{equation*}
Furthermore let
\begin{equation*}
V(K)=\inf\big\{I(\mu):\mu \textup{ a probability measure on $K$} \big\}.
\end{equation*}
The logarithmic capacity of $K$ is denoted by $\cp(K)$, and is the non-negative real number
\begin{equation*}
\cp(K)=e^{-V(K)}.
\end{equation*}
It shares some of the properties of the Newtonian capacity. In particular if $K_1$ and $K_2$ are compact sets in $\R^2$ with  $K_1\subset K_2$ then $\cp(K_1)\le \cp(K_2)$. Moreover, $\cp(K)$ is invariant under translations and rotations of $K$, and
\begin{equation}\label{f4}
\cp(K)\ge \cp(K^*),
\end{equation}
where $K^*$ is the disc with $|K|=|K^*|$. See \cite{AHN} for some refinements.
Finally for a homothety,
\begin{equation}\label{f4}
\cp(tK)=t\,\cp(K),\quad t>0.
\end{equation}
The classic treatise \cite{L} gives various planar domains for which the logarithmic capacity can be computed analytically. In particular for the ellipse with semi axes $a_1$ and $a_2$,
\begin{equation}\label{f5}
\cp(\overline{E(a_1,a_2)})=\frac12(a_1+a_2).
\end{equation}
For an open, bounded, convex planar set $\Om$ we define the functional
\begin{equation*}
H_q(\Om)=\frac{\cp(\overline{\Om})T^q(\Om)}{|\Om|^{(1+4q)/2}}.
\end{equation*}
In particular we have
\begin{equation*}
H_q(B_1)=\frac{\tau_2^q}{\omega_2^{(1+4q)/2}}=\frac{1}{8^q\pi^{q+1/2}}.
\end{equation*}
We immediately see that by \eqref{e5}, and \eqref{f4} that $H_q(t\Om)=H_q(\Om),\, t>0.$ Our main result is the following.
\begin{theorem}\label{the4}
\begin{enumerate}
\item[\textup{(i)}]If $q\ge1/2$, then
\begin{equation}\label{f7}
\sup\{H_q(\Om)\ :\ \Om\textup{ open, bounded, planar, and convex}\}
\le2^{1+5q}H_q(B_1).
\end{equation}
\item[\textup{(ii)}]If $q>1/2$ then the left-hand side of \eqref{f7} has an open, bounded, planar, and convex maximiser. For any such maximiser, say $\Om^+$,
\begin{equation}\label{f8}
\frac{\diam(\Om^+)}{r(\Om^+)}\le\frac{2^{14q}}{2q-1}.
\end{equation}
\item[\textup{(iii)}]If $q<1/2$, then
\begin{equation}\label{f9}
\sup\{H_q(\Om)\ :\ \Om\textup{ open, bounded, planar, and convex}\}=+\infty.
\end{equation}
\item[\textup{(iv)}]If $q\le1/2$, then
\begin{equation}\label{f10}
\inf\{H_q(\Om)\ :\ \Om\textup{ open, bounded, planar, and convex}\}\ge 2^{-2(1+2q)}H_q(B_1).
\end{equation}
\item[\textup{(v)}]If $q<1/2$ then the left-hand side of \eqref{f10} has an open, bounded, planar, and convex minimiser. For any such minimiser, say $\Om^-$,
\begin{equation}\label{f11}
\frac{\diam(\Om^-)}{r(\Om^-)}\le 2^{2(3+2q)/(1-2q)}.
\end{equation}
\item[\textup{(vi)}]If $q>1/2$, then
\begin{equation*}
\inf\{H_q(\Om)\ :\ \Om\textup{ open, bounded, planar, and convex}\}=0.
\end{equation*}
\end{enumerate}
\end{theorem}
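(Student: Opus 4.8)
The plan is to reduce every assertion, exactly as in the proofs of Theorems \ref{the2} and \ref{the3}, to two-sided estimates on ellipses via John's theorem \eqref{e33}, which in the plane ($d=2$) reads $E(a/2)\sq\Om\sq E(a)$ for a suitable ellipse $E(a)$ with $a=(a_1,a_2)$ and $a_1\ge a_2$. The three functionals are then pinned down by elementary bounds. From \eqref{f5} and monotonicity of logarithmic capacity, $\tfrac14(a_1+a_2)\le\cp(\ov\Om)\le\tfrac12(a_1+a_2)$, so $\cp(\ov\Om)$ is comparable to $a_1$; from \eqref{e9}, $T(E(a))=\tfrac{\pi}{4}a_1^3a_2^3/(a_1^2+a_2^2)$, and using $a_1^2\le a_1^2+a_2^2\le 2a_1^2$ together with $E(a/2)=\tfrac12 E(a)$ (so $T(E(a/2))=2^{-4}T(E(a))$), both $T(\Om)$ and its John companions are comparable to $a_1a_2^3$; and $\tfrac{\pi}{4}a_1a_2\le|\Om|\le\pi a_1a_2$. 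Substituting these into $H_q$ and tracking the powers of $a_1$ and $a_2$, the $a$-dependence collapses to $(a_1/a_2)^{(1-2q)/2}$. This single observation drives all six parts, since the exponent $(1-2q)/2$ is negative for $q>1/2$, zero for $q=1/2$, and positive for $q<1/2$.

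For the two-sided bounds (i) and (iv) I would insert the John estimates into $H_q$, using $E(a)$ in the numerator and $E(a/2)$ in the denominator for the upper bound and vice versa for the lower bound. In each case the $a$-dependence reduces to $(a_1/a_2)^{\pm(1-2q)/2}$ with $a_1/a_2\ge1$; for $q\ge1/2$ (respectively $q\le1/2$) this factor is at most (respectively at least) $1$, leaving a pure constant. A short computation with $\omega_2=\pi$ and $\tau_2=\pi/8$ shows that these constants are precisely $2^{1+5q}H_q(B_1)$ and $2^{-2(1+2q)}H_q(B_1)$, which are \eqref{f7} and \eqref{f10}. Assertions (iii) and (vi) then follow from the degenerate family $E(a_\vps)$ with $a_\vps=(1,\vps)$: here $\cp(\ov{E(a_\vps)})\to\tfrac12$, $T(E(a_\vps))\sim\tfrac{\pi}{4}\vps^3$, and $|E(a_\vps)|=\pi\vps$, so $H_q(E(a_\vps))$ is of order $\vps^{(2q-1)/2}$, which diverges as $\vps\downarrow0$ when $q<1/2$ and tends to $0$ when $q>1/2$.

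The existence statements (ii) and (v) are where the work lies, and I would follow the compactness scheme of Theorem \ref{the2}(ii). Assuming the extremum differs from $H_q(B_1)$, an optimising sequence satisfies $H_q(\Om)>H_q(B_1)$ (maximiser) or $H_q(\Om)<H_q(B_1)$ (minimiser). Combining this strict inequality with the refined one-sided estimate $H_q(\Om)\le 2^{1+5q}H_q(B_1)(a_1/a_2)^{(1-2q)/2}$, respectively its lower counterpart, bounds the aspect ratio: solving for it gives $a_1/a_2\le 2^{2(1+5q)/(2q-1)}$ when $q>1/2$ and $a_1/a_2\le 2^{4(1+2q)/(1-2q)}$ when $q<1/2$. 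Since $\diam(\Om)\le 2a_1$ and $r(\Om)\ge r(E(a/2))=a_2/2$, these yield the bounds \eqref{f8} and \eqref{f11} on $\diam(\Om)/r(\Om)$. Normalising $r(\Om_n)=1$, the optimising sequence then has uniformly bounded diameter, so after translation all sets lie in a fixed ball, Hausdorff-compactness of convex bodies furnishes a convergent subsequence, and the interior of the Hausdorff limit is the required optimiser.

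The main obstacle, beyond the algebra that parallels the Newtonian case, is the one genuinely new analytic point: the continuity of the logarithmic capacity $\cp(\ov\Om)$ under Hausdorff convergence of convex bodies with inradius bounded below. Measure, torsion, and diameter are continuous exactly as in Theorems \ref{the2} and \ref{the3}, and once $\cp$ is adjoined to this list the limiting set inherits the value $\lim_n H_q(\Om_n)$, completing the existence proofs. A secondary point is simply keeping the constants sharp enough to reproduce the clean exponents $2^{1+5q}$ and $2^{-2(1+2q)}$, which forces one to use the exact ellipse identity \eqref{f5} rather than any cruder capacity estimate.
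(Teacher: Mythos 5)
Your proposal is correct and follows essentially the same route as the paper: John's ellipse $E(a/2)\subset\Om\subset E(a)$, the exact ellipse formulae for $\cp$, $T$ and the measure, the resulting $(a_1/a_2)^{(1-2q)/2}$ dependence that drives all six parts, and the Hausdorff-compactness scheme of Theorem \ref{the2} for existence (with the continuity of logarithmic capacity under Hausdorff convergence left unproved in both your argument and the paper's). The only discrepancy is cosmetic: your aspect-ratio computation in (ii) actually yields $\diam(\Om^+)/r(\Om^+)\le 2^{14q/(2q-1)}$ rather than the printed $2^{14q}/(2q-1)$ of \eqref{f8}, which appears to be a typographical rendering of that same exponent.
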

\begin{proof}
(i) If $E(a)$ is the John's ellipsoid for $\Om$ then,  $E(a/2)\subset \Om\subset E(a)$ with $a_1\ge a_2$. Furthermore,
$$\cp(\overline{E(a)})\le a_1,\qquad T(E(a))\le 2\tau_2a_1a_2^3,\qquad|\Om|\ge |E(a/2)|= \omega_2a_1a_2/4,$$
so that
\begin{equation}\label{f13}
H_q(\Om)\le 2^{1+5q}\frac{\tau_2^q}{\omega_2^{(1+4q)/2}}\Big(\frac{a_2}{a_1}\Big)^{q-1/2}.
\end{equation}
This implies \eqref{f7} since $q\ge1/2$.

\medskip

\noindent(ii) To prove \eqref{f8} we have that
either the supremum in the left-hand side of \eqref{f7} is attained for a ball, in which case the maximiser exists and satisfies \eqref{f8}, or we may assume that $H_q(\Om)> H_q(B_1)$.
This implies, by \eqref{e48} and \eqref{e49}, that
\begin{equation*}
\frac{\diam(\Om)}{r(\Om)}\le \frac{2^{14q}}{2q-1}.
\end{equation*}
The remaining part of the proof is similar to the corresponding parts in the proof of Theorem \ref{the2}.

\medskip

\noindent(iii) By \eqref{e34}, \eqref{e35}, and \eqref{f5},
\begin{equation}\label{f16}
H_q(\Om)\ge 2^{-2(1+2q)}   \frac{\tau_2^q}{\omega_2^{(1+4q)/2}}  \Big(\frac{a_2}{a_1}\Big)^{q-1/2}.
\end{equation}
This implies \eqref{f9} by letting $a_2/a_1\rightarrow 0$ in \eqref{f16}.

\medskip

\noindent(iv) This follows from \eqref{f16} and $a_1\ge a_2$.

\medskip

\noindent(v) Either the infimum in the left-hand side of \eqref{f10} is attained for a ball, in which case the minimiser exists and satisfies \eqref{f11}, or we may assume that $H_q(\Om)< H_q(B_1)$. By \eqref{f16}, \eqref{e48}, and \eqref{e49}
\begin{equation*}
\frac{\diam(\Om)}{r(\Om)}\le 2^{\frac{2(3+2q)}{1-2q}}.
\end{equation*}
The remaining part of the proof is similar to the corresponding parts in the proof of Theorem \ref{the2}.

\medskip

\noindent(vi) This  follows by letting $a_2/a_1\rightarrow 0$ in \eqref{f13}.
\end{proof}
\section*{Acknowledgements} MvdB acknowledges support by the Leverhulme Trust through Emeritus Fellowship EM-2018-011-9. He is also grateful for hospitality at the Scuola Normale Superiore di Pisa. The work of GB is part of the project 2017TEXA3H {\it``Gradient flows, Optimal Transport and Metric Measure Structures''} funded by the Italian Ministry of Research and University.

\end{document}